\newtheorem{lemma}{Lemma}[section]
\newtheorem{theorem}[lemma]{Theorem}
\newtheorem{theoremx}{Theorem}
\newtheorem{prop}[lemma]{Proposition}
\theoremstyle{definition}
\theoremstyle{remark}
\newtheorem{rem}[lemma]{Remark} 
\newcommand{\Iso}{{\mathrm{Iso}}}
\let\oldbibliography\thebibliography
\renewcommand{\thebibliography}[1]{%
  \oldbibliography{#1}%
  \setlength{\itemsep}{.4\baselineskip}%
}
\author{\tiny{Maria Dostert}}
\address{\tiny{Department of Mathematics, Royal Institute of Technology (KTH), SE--100 44 Stockholm, Sweden}}
\email{\tiny{maria.dostert@gmail.com}}
\author{\tiny{Alexander Kolpakov}}
\address{\tiny{Institut de math\'ematiques, Universit\'e de Neuch\^atel, CH--2000 Neuch\^atel, Switzerland}}
\email{\tiny{kolpakov.alexander@gmail.com}}
\title[Packable hyperbolic surfaces with symmetries]{Packable hyperbolic surfaces with symmetries}
\begin{document}

\begin{abstract}
We discuss several ways of packing a hyperbolic surface with circles (of either varying radii or all being congruent) or horocycles, and note down some observations related to their symmetries (or the absence thereof). \\

\noindent
\textit{Key words: } hyperbolic surface, circle packing, packing density.\\

\noindent
\textit{2010 AMS Classification: } 05B40, 20H10, 11F06 \\
\end{abstract}

\maketitle

\section{Introduction}\label{intro}

Let $S_g$ be a hyperbolic surface\footnote{Here and below, we always assume that surfaces are connected, complete, closed (i.e. without boundary), and orientable. The Gau\ss--Bonnet theorem implies that a compact surface of genus $g=0$ (sphere) or $g=1$ (torus) cannot be hyperbolic.} of genus $g\geq 2$, i.e. a connected complete compact closed orientable topological surface of genus $g$ endowed with a Riemannian metric of constant sectional curvature $-1$. Let $\Gamma$ be a discrete subgroup of $\mathrm{PSL}_2(\mathbb{R})$ acting by isometries on the upper half--plane model of the hyperbolic plane $\mathbb{H}^2$ with compact quotient: in this case $\Gamma$ is called co--compact. Then the surface $S_g$ can be equivalently described as the quotient $S_g = \mathbb{H}^2 / \Gamma$ for an appropriate co--compact torsion--free $\Gamma < \mathrm{PSL}_2(\mathbb{R})$. 

Let $C$ be a set of geodesic circles embedded in $S_g$ with non--intersecting interiors. Let $T_C$ be the packing graph whose vertices are the centres of the circles in $C$, which are connected by an edge whenever the corresponding circles are tangent. 

The surface $S_g$ is called \textit{packable} by circles in $C$ if its packing graph $T_C$ viewed as embedded in $S_g$ with geodesic edges provides a combinatorial triangulation of $S_g$. Here and below by a (combinatorial) triangulation of $S_g$ we mean $S_g$ as a topological surface with an embedded locally finite graph $T$ such that $S_g \setminus T$ is a union of topological triangles.

The uniformization theorem of Beardon and Stephenson \cite[Theorem 4]{BeardonStephenson}, which is a generalisation of the classical K\"obe--Andreev--Thurston theorem, asserts that for every combinatorial triangulation of a topological genus $g\geq 0$ surface with graph $T$ there exists a constant sectional curvature metric on it (with curvature normalised to $-1$, $0$ or $+1$) and a set of geodesic circles $C$ in this metric such that $T_C$ is isomorphic to $T$.  For more information about packable surfaces, we refer the reader to the monograph \cite{S}. 

The circles in $C$ may have different radii, and the packing that they provide is combinatorially ``tight'': the tangency relation for circles is transitive. This notion has to be contrasted with the notion of a circle packing where all circles are supposed to be congruent (i.e. isometric to each other).

A circle packing on $S_g$ is a set $P$ of congruent radius $r > 0$ geodesic circles embedded in $S_g$ with non--intersecting interiors such that no more radius $r$ circle can be added to it. We shall assume that, in general, $r$ is less than $\mathrm{inj\, rad}\, S_g$, the injectivity radius of $S_g$. 

The density of such a packing $P$ equals the ratio of the area of $S_g$ covered by the circles in $P$ to the total area of $S_g$. Let us recall that the Gau\ss--Bonnet theorem implies $\mathrm{Area}(S_g) $ equals $ 4\pi(g-1)$. This definition agrees with the usual definition of local density if one lifts the circle packing $P$ of $S_g$ to the universal cover $\mathbb{H}^2$. Indeed, once we find a fundamental domain $D$ for the action of $\Gamma$ such that $S_g = \mathbb{H}^2/\Gamma$, the packing $P$ lifts to $\mathbb{H}^2$. Then the local density of $P$ equals the area of $D \cap \{ \gamma(p) \,|\, \gamma \in \Gamma,\, p \in P \}$ divided by the total area of $D$. This definition is independent on the choice of $D$. Moreover, given $\Gamma$ one can classify its fundamental domains which can be useful for computational purposes \cite{LM, LMV}.

In what follows we shall show that in some cases it is easy to see that a surface is packable provided it has enough symmetries. More precisely, let $S_g$ be an orientable genus $g\geq 0$ surface (with a given metric on it), and let $\Iso^+(S_g)$ be the group of orientation--preserving self--isometries of $S_g$.

\begin{theoremx}[Theorem \ref{symmetry-packable}]
Let $S_g$ be a hyperbolic surface of genus $g\geq 2$ such that $|\Iso^+(S_g)| > 12 (g-1)$. Then $S_g$ is packable. 
\end{theoremx}

In the case of a cusped hyperbolic surface $S$ we shall consider its packing by horocycles instead of ordinary geodesic (i.e. compact) circles. Let $\Gamma < \mathrm{PSL}_2(\mathbb{R})$ be a discrete subgroup with finite--area fundamental domain $D$: such $\Gamma$ is called co--finite. As before, we have that $S = \mathbb{H}^2/\Gamma$ for an appropriate co--finite torsion--free subgroup $\Gamma < \mathrm{PSL}_2(\mathbb{R})$. If $P$ is a set of horocycles in $S$, then $P$ can be lifted to $\mathbb{H}^2$ and we can define its local density similar to the case of compact circles by taking the ratio of the area of $D \cap \{ \gamma(p) \,|\, \gamma \in \Gamma,\, p \in P \}$ to the total area of $D$. This again turns out to be the same as the area of the cusps of $S$ determined by the horocycles in $P$ divided by the area of $S$. 
The maximal horocycle packing density in $\mathbb{H}^2$ is known to equal $\frac{3}{\pi}$, cf. \cite{B, Fejes-Toth, Kellerhals}. We show that only a special class of cusped hyperbolic surfaces may achieve maximal packing density.

\begin{theoremx}[Theorem \ref{thm-packable-horocycles}]
Let $S$ be a hyperbolic surface with cusps. Then $S$ is packable by congruent horocycles with packing density $\frac{3}{\pi}$ if and only if $S = \mathbb{H}^2/ \Gamma$ for some $\Gamma < \mathrm{PSL}_2(\mathbb{Z})$, up to an appropriate conjugation in $\mathrm{PSL}_2(\mathbb{R})$.
\end{theoremx} 

Let us recall that one can define three types of ``circles'' in $\mathbb{H}^2$. First, compact circles that are centred at points of $\mathbb{H}^2$. Second, horocycles that are centred at ideal points on $\partial \mathbb{H}^2$. Equivalently, in the upper half--plane model of $\mathbb{H}^2$, horocycles are represented by circles tangent to $\partial \mathbb{H}^2$ and also by horizontal lines. Third, hypercycles  that are centred at hyperideal points or, equivalently, represented by curves equidistant from a geodesic line. 

Here we would like to stress the fact that, in general, circle packings (respectively, horocycle packings) in the hyperbolic plane $\mathbb{H}^2$ behave in a drastically different way, and that their packing density is not necessarily a well--defined quantity \cite{BR}. This difficulty can be alleviated by studying local densities and packings that are invariant under the action of a co--compact (respectively, co--finite) Fuchsian group as discussed above. 

\section*{Acknowledgements}
\noindent
{\small M.D. was partially supported by the Wallenberg AI, Autonomous Systems and Software Program (WASP) funded by the Knut and Alice Wallenberg Foundation. A.K. was supported by the Swiss National Science Foundation, project no.~PP00P2--202667. The authors would like to thank Noam Elkies for drawing their attention to this circle of problems during the AIM workshop ``Discrete geometry and automorphic forms'' in September, 2018. The authors would also like to thank Gareth Jones, Alan Reid and the anonymous referee for numerous comments.}

\section{Compact packable surfaces}\label{compact_packable}

The Teichm\"uller space $\mathcal{T}_g$ contains a dense subset of packable surfaces, for all $g\geq 2$, in the compact case \cite{BowersStephenson}. All cusped hyperbolic surfaces are packable \cite{Williams}. Moreover, by \cite{McCaughan} if $S_g$ is packable then $S_g = \mathbb{H}^2 / \Gamma$, where $\Gamma < \mathrm{PSL}_2(\mathbb{R} \cap \overline{\mathbb{Q}})$, so that $S_g$ is defined over algebraic numbers. Thus being packable puts strong constraints on the metric of $S_g$.

Now, let $O_g$ be an orbifold genus $g$ surface with $k\geq 0$ orbifold points of orders $n_1, \dots, n_k \geq 2$ or, in another words, let $O_g$ be an orbifold of \textit{signature} $(g; n_1, \ldots, n_k)$. Then we say that $O_g$ is packable by a set of circles $C$ if all the orbifold points of $O_g$ are circle centres, and the tangency relation between the circles in $C$ is transitive (like in the manifold case). Thus, if $O'_{g'}$ is an orbifold cover of $O_g$, circles lift always to circles and $O'_{g'}$ is also packable. 

In what follows, $S(p,q,r)$ will be the standard notation for the hyperbolic ``turnover'' orbifold of genus $g = 0$ with three orbifolds points of orders $p, q, r \geq 2$ such that $1/p + 1/q + 1/r < 1$. Such an orbifold can be obtained by identifying two copies of a hyperbolic triangle with angles $\pi/p$, $\pi/q$ and $\pi/r$ along their respective isometric sides. This orbifold is geometrically ``rigid'' meaning that the hyperbolic metric on it is completely determined by the orbifold angles. 

The following is a simple observation that follows from the facts that $S(p,q,r)$ can be packed by three circles, so that any orbifold covering of $S(p,q,r)$ is packable as noted above. 

\begin{lemma}\label{turnover-packable}
Let a hyperbolic surface $S_g$ of genus $g \geq 2$ be a branched covering of $S(p,q,r)$ with $1/p+1/q+1/r < 1$. Then $S_g$ is packable. 
\end{lemma}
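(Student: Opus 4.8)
The plan is to prove the lemma in two moves: first make explicit the three-circle packing of the turnover orbifold $S(p,q,r)$ promised in the text, and then transport it to $S_g$ along the covering, using the lifting principle (circles lift to circles under orbifold coverings) recorded just before the statement. Since a manifold is a special orbifold, the two facts quoted there almost give the result immediately; the only genuine work is to check the base packing and to make sure that the lifted configuration is a triangulation of the closed surface $S_g$ rather than merely of an orbifold.

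For the base case I would model $S(p,q,r)$ as the double of a hyperbolic triangle $T$ with angles $\pi/p$, $\pi/q$, $\pi/r$ across its boundary. I place a geodesic circle at each vertex, with radii chosen so that, for every edge, the sum of the radii at its two endpoints equals the edge length; the triangle inequalities make this linear system solvable with positive radii, and the circles then have pairwise disjoint interiors and are externally tangent, one pair on each edge. Doubling turns each into a full embedded geodesic circle around a cone point of order $p$, $q$, $r$, any two of them tangent at the single point where they meet on the doubling locus. The resulting tangency graph is the complete graph $K_3$ on the three cone points; its three geodesic edges are exactly the edges of $T$, so $K_3$ separates the underlying sphere into the two copies of $T$ and therefore triangulates it, while the tangency relation is transitive because $K_3$ is complete. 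This realises the packing of $S(p,q,r)$ by three circles asserted in the text.

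For the lifting, I would first note that since $S_g$ is a smooth hyperbolic surface covering the hyperbolic orbifold $S(p,q,r)$, the branched covering $f\colon S_g \to S(p,q,r)$ must unramify each cone point: the ramification index over a cone point of order $n$ equals $n$, so that the pulled-back cone angle is $2\pi$ and no new singularities appear. Equivalently, $S_g = \mathbb{H}^2/\Gamma$ for a torsion-free finite-index subgroup $\Gamma$ of the orientation-preserving triangle group $\Delta(p,q,r)$, and $f$ is an orbifold covering. By the fact quoted before the statement, the preimage under $f$ of the three base circles is a set $C$ of embedded geodesic circles on $S_g$ with disjoint interiors, tangent exactly above the downstairs tangencies, so their tangency relation is again transitive. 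The packing graph $T_C = f^{-1}(K_3)$ is the preimage of a triangulation under a map that is an ordinary covering off the vertices and looks like $z \mapsto z^n$ at each vertex; each of the two simply connected triangular faces downstairs lifts to disjoint triangular faces, so $S_g \setminus T_C$ is a union of topological triangles and $T_C$ triangulates $S_g$. Hence $S_g$ is packable.

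The step I expect to require the most care is the interface between the two moves: verifying that the hypothesis of being a branched covering of a hyperbolic surface forces full ramification of the right order at the cone points, and checking the local picture at a lifted vertex, where $2n$ triangle corners now surround a smooth point of cone angle $2\pi$. This is what upgrades the orbifold triangulation, in which a cone point has only two faces around it, to a genuine triangulation of the closed surface, and what guarantees that the members of $C$ are honest embedded circles. The base-case radius computation and the transitivity of tangency are routine once the doubled-triangle model is in place.
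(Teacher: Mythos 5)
Your proposal is correct and follows essentially the same route as the paper: you pack the turnover $S(p,q,r)$ by three circles centred at the vertices of the doubled triangle, with radii solving the linear system whose solution is exactly the paper's $x = \frac{a-b+c}{2}$, $y = \frac{a+b-c}{2}$, $z = \frac{-a+b+c}{2}$, and then lift along the orbifold covering using the principle that circles lift to circles. Your additional verifications (full ramification of order $n$ over each cone point, and the local picture of $2n$ triangle corners around a lifted vertex) are details the paper leaves implicit rather than a different argument.
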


Indeed, it is well--known that each $S(p,q,r)$ can be obtained from a hyperbolic triangle $T$ with dihedral angles $\pi/p$, $\pi/q$, $\pi/r$ by making a ``turnover'' (i.e. ``glueing'' two copies of $T$ isometrically along their boundaries or, equivalently, ``doubling'' $T$ along its boundary). Let $a$, $b$, $c$ be the respective side lengths of such a triangle, as shown in Figure~\ref{fig:turnover}. Then the three circular segments of radii $x$, $y$ and $z$ centred at the corresponding vertices becomes circles in $S(p,q,r)$ centred at the orbifold points. Here, we set $x = \frac{a-b+c}{2}$, $y = \frac{a+b-c}{2}$, $z = \frac{-a+b+c}{2}$, and the side lengths $a$, $b$, and $c$ can be determined from the hyperbolic rule of cosines \cite[Theorem~3.5.4]{Ratcliffe}. 

\begin{figure}
\centering
\includegraphics[scale=0.5]{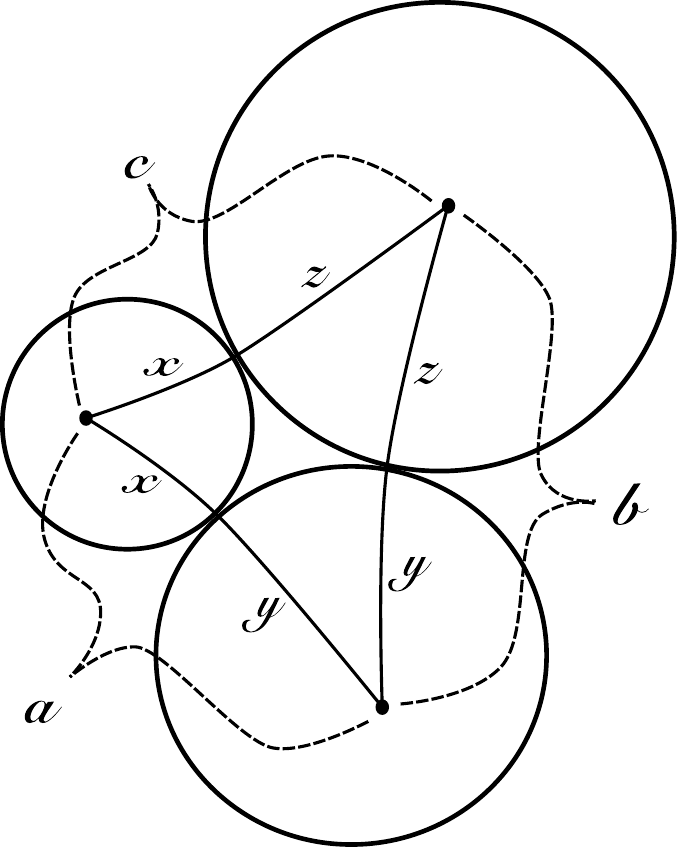}
\caption{A hyperbolic triangle $T$ with side lengths $a$, $b$, $c$, such that the three circles of radii $x$, $y$, $z$ are tangent at points on its sides. Taking the intersection of $T$ with the circles and ``doubling'' it along the boundary produces a packable ``turnover'' orbifold (and its circle packing)}\label{fig:turnover}
\end{figure}

Let $\Iso^+(S_g)$ be the group of orientation--preserving self--isometries of $S_g$, which is known to be finite and isomorphic to $N_H(\Gamma)/\Gamma$, once $S_g = \mathbb{H}^2/\Gamma$, where $N_H(\Gamma)$ is the normaliser of $\Gamma$ in $H = \Iso^+(\mathbb{H}^2) \cong \mathrm{PSL}_2(\mathbb{R})$. 

One can say that ``most'' surfaces in $\mathcal{T}_g$ are asymmetric (i.e. have trivial group of self--isometries), if $g \geq 3$, since such surfaces form an open dense subset of $\mathcal{T}_g$. All genus $2$ surfaces are hyperelliptic, and thus admit an order $2$ isometry: however, most of them (in the above sense) have only their hyperelliptic involution as a non--trivial isometry.  

As mentioned above, most surfaces are not packable, although packable ones form a dense subset in $\mathcal{T}_g$. This motivates the question: ``Given a surface $S_g \in \mathcal{T}_g$ with a certain number of symmetries, can we guarantee that $S_g$ is packable?'' 

We also know, by the Hurwitz automorphism theorem, that $|\Iso^+(S_g)| \leq 84(g-1)$, for any $S_g \in \mathcal{T}_g$ with $g\geq 2$. The next straightforward combinatorial argument shows that given enough symmetries we can always guarantee that $S_g$ covers a turnover orbifold (an observation mentioned earlier in \cite{Singerman}). 

\begin{theorem}\label{symmetry-packable}
Let $S_g$ be a hyperbolic surface of genus $g\geq 2$ such that $|\Iso^+(S_g)| > 12 (g-1)$. Then $S_g$ is packable. 
\end{theorem}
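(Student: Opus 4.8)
The plan is to reduce the claim to Lemma~\ref{turnover-packable}: it suffices to show that the hypothesis $|\Iso^+(S_g)| > 12(g-1)$ forces $S_g$ to be a branched cover of some turnover orbifold $S(p,q,r)$. I would set $G = \Iso^+(S_g)$ and pass to the quotient orbifold $O = S_g / G$. Since $G$ acts on $S_g = \mathbb{H}^2/\Gamma$ by orientation-preserving isometries, $O$ is a good hyperbolic $2$-orbifold $\mathbb{H}^2 / \widehat{\Gamma}$, where $\widehat{\Gamma} = N_H(\Gamma)$ is a Fuchsian group containing $\Gamma$ with $\widehat{\Gamma}/\Gamma \cong G$. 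The covering $S_g \to O$ is then a branched cover of the required kind \emph{provided} $O$ is a turnover $S(p,q,r)$, so the whole problem becomes: show the area constraint coming from the order hypothesis pins down the signature of $O$ to be exactly $(0; p,q,r)$.

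The key quantitative input is the orbifold Euler characteristic (or, equivalently, the Gauss--Bonnet area). If $O$ has signature $(h; m_1,\dots,m_k)$, then the Riemann--Hurwitz / Gauss--Bonnet relation gives
\begin{equation}\label{eq:RH}
4\pi(g-1) = \Area(S_g) = |G|\cdot \Area(O) = |G|\cdot 2\pi\left( 2h - 2 + \sum_{i=1}^{k}\left(1 - \tfrac{1}{m_i}\right)\right),
\end{equation}
so that, writing $-\chi^{\mathrm{orb}}(O) = 2h-2+\sum_i(1-1/m_i)$, we have $|G| = 2(g-1)/(-\chi^{\mathrm{orb}}(O))$. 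The hypothesis $|G| > 12(g-1)$ is therefore equivalent to the purely combinatorial inequality
\begin{equation}\label{eq:chi-bound}
-\chi^{\mathrm{orb}}(O) \;=\; 2h-2+\sum_{i=1}^{k}\left(1-\tfrac{1}{m_i}\right) \;<\; \tfrac{1}{6}.
\end{equation}

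The heart of the argument is then a finite case analysis showing that a hyperbolic $2$-orbifold $O$ (i.e. one with $-\chi^{\mathrm{orb}}(O)>0$) satisfying \eqref{eq:chi-bound} must have signature $(0;p,q,r)$ for some admissible turnover $(p,q,r)$. Here I would argue that the genus must be $h=0$, since any handle contributes $2$ to $2h-2$; and that the number of cone points must be exactly three: with $k\leq 2$ cone points and $h=0$ the orbifold cannot be hyperbolic, while with $k\geq 4$ cone points the smallest positive value of $\sum_i(1-1/m_i)-2$ is achieved at all $m_i=2$, giving $4\cdot\tfrac12 - 2 = 0$ and then the next smallest positive value exceeds $\tfrac16$ (e.g. one point of order $3$ and three of order $2$ gives $\tfrac16$, which is not $<\tfrac16$), so $\eqref{eq:chi-bound}$ fails. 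This leaves $k=3$, $h=0$, and $O = S(p,q,r)$; being hyperbolic means $1/p+1/q+1/r<1$, exactly the turnover hypothesis. I expect this minimization over signatures to be the main obstacle, as it requires checking that no near-degenerate hyperbolic signature sneaks under the bound $\tfrac16$; the constant $12$ in the theorem is presumably calibrated precisely so that $(0;2,3,7)$, the minimal-area turnover with $-\chi^{\mathrm{orb}}=\tfrac{1}{42}$, and all other turnovers survive while all non-turnover hyperbolic signatures are excluded.

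Once $O$ is identified as a turnover, the branched covering $S_g \to O = S(p,q,r)$ is exactly the situation of Lemma~\ref{turnover-packable}, which yields that $S_g$ is packable, completing the proof. One technical point I would be careful about is whether $G$ could act with a quotient whose signature has boundary or is non-orientable; but since $G=\Iso^+(S_g)$ consists of orientation-preserving isometries and $S_g$ is closed, the quotient $O$ is a closed orientable orbifold, so only cone-point singularities arise and the analysis via \eqref{eq:chi-bound} is exhaustive.
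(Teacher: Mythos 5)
Your proposal is correct and follows essentially the same route as the paper: quotient by $G=\Iso^+(S_g)$, translate the order hypothesis via Riemann--Hurwitz into $-\chi^{\mathrm{orb}}(O)<\tfrac16$, run the finite case analysis on signatures (where $(0;2,2,2,3)$ with value exactly $\tfrac16$ is the critical excluded case, matching the constant $12$), conclude $O$ is a turnover, and invoke Lemma~\ref{turnover-packable}. The paper's proof is the same case-by-case check, so there is nothing to add.
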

\begin{proof}
We have that $S_g$ is a branched covering of $O = S_g / H$, where $H = \Iso^+(S_g)$. Hence, we can apply the Riemann--Hurwitz formula to it. 

Let us suppose that $O$ is a genus $h\geq 0$ surface with $k\geq 0$ orbifold points of orders $m_1, \dots, m_k$. Thus the ratio $\tau = \mathrm{Area}(O) / (2 \pi)$ satisfies
\begin{equation}
\tau = 2h-2 + \sum^k_{i=1} \left( 1 - \frac{1}{m_i} \right) = \frac{2g-2}{|H|} < \frac{1}{6}. 
\end{equation}

First, if $h \geq 2$, then $\tau \geq 2$, which is impossible by the above inequality. If $h=1$, then in order for $O$ to be an orientable hyperbolic orbifold we need $k\geq 1$. Then, $\tau \geq \frac{1}{2}$. 

Finally, let $h=0$. In this case, the fact that $O$ is an orientable hyperbolic orbifold implies that either we have $k=3$ with $\sum^3_{i=1} \frac{1}{m_i} < 1$, or we have $k=4$ and $m_1, m_2, m_3 \geq 2$, while $m_4 \geq 3$, or $k\geq 5$ with $m_i \geq 2$, $1 \leq i \leq k$. The latter two possibilities give us $\tau \geq \frac{1}{6}$ and $\tau \geq \frac{1}{2}$. 

Thus our case--by--case check implies that $O = S(m_1, m_2, m_3)$, with $\sum^3_{i=1} \frac{1}{m_i} < 1$, and $S_g$ is packable by Lemma~\ref{turnover-packable}.
\end{proof}

\begin{rem}
Note that in Theorem~\ref{symmetry-packable}, we cannot allow $\Iso^+(S_g)$ be $12 (g-1)$ as the following example shows. Let $O$ be an orbifold with signature $(0; 2,2,2,3)$. The corresponding Fuchsian group is known to be maximal \cite{Greenberg}, which means that $O$ does not cover a smaller orbifold. We can map $\pi^{orb}_1(O) = \langle a, b, c, d | a^2, b^2, c^2, d^3, a b c d^{-1} \rangle$ onto the symmetric group $\mathfrak{S}_4$ as $a \mapsto (2,3)$, $b \mapsto (1,2)(3,4)$, $c \mapsto (3,4)$, $d \mapsto (1,2,3)$. Then the kernel of this map $\phi$ is torsion--free and corresponds to a cover $S$ of $O$ of degree $24$. By the Riemann--Hurwitz formula, $S$ is a genus $3$ surface. 

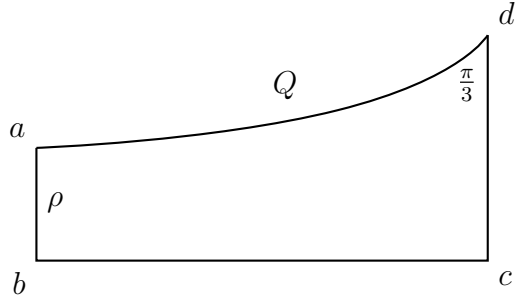
\begin{figure}
\centering
\begin{tikzpicture}[scale = 2]
\draw[black,thick] (0,0.75) -- (0,0) -- (3,0) -- (3,1.5); 
\draw [black, thick] plot [smooth, tension=1] coordinates {(0,0.75)(2,1)(3,1.5)};
\node [above right] at (1.5,1) {$Q$};
\node [below left] at (3,1.35) {$\frac{\pi}{3}$};
\node [above left] at (0,0.75) {$a$};
\node [below left] at (0,0) {$b$};
\node [below right] at (3,0) {$c$};
\node [above right] at (3,1.5) {$d$};
\node [above right] at (0,0.25) {$\rho$};
\end{tikzpicture}
\caption{The Lambert quadrilateral $Q$: all of the plane angles are right, except one angle of $\frac{\pi}{3}$. One side length $\rho$ is known to be variable within a certain interval. The double of $Q$ along its boundary is the orbifold $O$ with signature $(0; 2,2,2,3)$ and fundamental group $\pi^{orb}_1(O) = \langle a, b, c, d | a^2, b^2, c^2, d^3, a b c d^{-1} \rangle$}\label{fig:Lambert}
\end{figure}

Moreover, we can choose $O$ to be non--arithmetic. Indeed, there exists a one--parameter family of orbifolds with signature $(0; 2,2,2,3)$ coming from a  family of Lambert's quadrilaterals $Q$ shown in Figure \ref{fig:Lambert}. A ``double'' of the quadrilateral $Q$ along its boundary is an orbifold $O$ with signature $(0; 2, 2, 2, 3)$. One of its sides can be given length $\rho$ varied so that $\cosh(\rho)$ is a transcendental number. Then $O$ is non--arithmetic by \cite[Theorem 4]{Vinberg} (since the trace of $ab$ equals $2\cosh(\rho)$) and the argument from  \cite[Theorem 1]{Jones-2} shows that $S$ has exactly $|\mathfrak{S}_4| = 24$ symmetries. 

By \cite[Chapter 9]{McCaughan} (cf. also \cite[Theorem 3]{LMS} as a more accessible reference), any packable hyperbolic surface has to be defined as $\mathbb{H}^2/\Gamma$ with $\Gamma < \mathrm{PSL}_2(\mathbb{R} \cap\overline{\mathbb{Q}})$. In our case, however, $\Gamma$ contains the element $r = (ab)^2$ with transcendental trace $2 \cosh(2\rho) = 4 \cosh^2(\rho) - 2 \notin \overline{\mathbb{Q}}$. Since $r \in \ker \phi$, then $S$ cannot be packable.
\end{rem}

However, a packable surface does not need to have any symmetries at all. One should compare the following theorem with the results of \cite{Jones, Jones-2}. 

\begin{theorem}\label{prop:no-aut}
There exists an infinite family of genus $g \geq 14 247$ packable hyperbolic surfaces $S_g$ with $g \to \infty$ such that $\Iso^+(S_g) \cong \{ \mathrm{id} \}$. 
\end{theorem}

For convenience, we shall refer to the following technical lemma, while the proof of Theorem~\ref{prop:no-aut} comes right after. 

\begin{lemma}\label{lemma-split}
All rational primes of the form $p = 3\cdot 5 \cdot 7 \cdot n - 1$, $n \in \mathbb{N}$, split completely in the field $k = \mathbb{Q}(\sqrt{5}, \cos(\pi/7))$.
\end{lemma}
\begin{proof}
Let us consider $k_1 = \mathbb{Q}(\sqrt{5})$, and $k_2 = \mathbb{Q}(\cos(\pi/7))$. Any prime of the form $p = 3 \cdot 5 \cdot 7 \cdot n - 1$ splits completely in both $k_1$ and $k_2$, and thus $p$ splits completely in their compositum, which is $k$ (cf. \cite[Exercise I.8.3]{Neukirch}). 

In $k_1$, we have that $\left( \frac{5}{p} \right) = 1$ by using quadratic reciprocity, and thus $p$ splits completely in $k_1$ by applying \cite[Proposition I.8.3]{Neukirch}.

In $k_2$, we have that $k_2 = \mathbb{Q}(\theta)$ with $\theta = 2\cos(\pi/7)$ that has minimal polynomial $x^3 - x^2 - 2 x + 1$ with discriminant $49 = 7^2$. Thus any prime that is a cubic residue modulo $7$ splits completely in $k_2$ by applying \cite[Proposition I.8.3]{Neukirch} again. 
\end{proof}

\textit{Proof of Theorem \ref{prop:no-aut}.}
Let us consider the non--arithmetic $S(3,5,7)$ orbifold \cite{Takeuchi} with fundamental group  
$\Gamma = \langle a, b \,|\, a^3, b^5, (ab)^7 \rangle$. Moreover, as follows from \cite{Greenberg}, $\Gamma$ is a maximal Fuchsian group.

The trace field $k = \mathbb{Q}(\sqrt{5}, \cos(\pi/7))$ of $\Gamma$ coincides with its invariant trace field. The strong approximation theorem \cite[\S 7.4]{PR} implies that $\Gamma$ surjects on all but a finite number of finite simple groups $\mathrm{PSL}_2(R_k/P)$ where $R_k$ is the ring of integers in $k$ and $P$ a prime ideal in $R_k$.

Note that any non--trivial normal subgroup of $\Gamma$ is  
torsion--free: by using the presentation we easily obtain that any homomorphism $\phi$ with $\phi(a)$ or $\phi(b)$ trivial has trivial image.

By Dirichlet's theorem on prime progressions, there are infinitely many  
rational primes of the form $p = 3 \cdot 5 \cdot 7 \cdot n - 1$. Moreover, such primes $p$ all split completely in $k$ by Lemma~\ref{lemma-split}:
$$ p R_k = P_1 \cdot P_2 \cdot \ldots \cdot P_6,$$
with $P_i$ being distinct prime ideals of $R_k$ with norms $N(P_i) = p$, $i = 1, \ldots, 6$. Thus $R/P_i = Z/p =F_p$, $i = 1, \ldots, 6$.

Consider the epimorphisms $\phi_p: \Gamma \rightarrow \mathrm{PSL}_2(F_p)$ where $p$ is a rational prime of the form $p = 105 \cdot n - 1$, as above. From the description of the subgroup structure of $\mathrm{PSL}_2(F_p)$ from \cite[Theorem 2.1]{King} (cf. also \cite{Dickson}), it follows that $\mathrm{PSL}_2(F_p)$ contains a dihedral group $D_{p-1}$ of order $p - 1 = 105 \cdot n - 2$.

From the order of $D_{p-1}$, we have that it does not contain any elements of order $3$, $5$, or $7$. By the above discussion of normal subgroups of $\Gamma$, the kernel of $\phi_p$ is torsion--free. Hence the preimage $\Gamma_p$ of $D_{p-1}$ is torsion--free.  Moreover, $D_{p-1}$ is a maximal subgroup of $\mathrm{PSL}_2(F_p)$ \cite[Corollary 2.2]{King} (cf. \cite{Dickson}), and so $\Gamma_p$ is a maximal non--normal subgroup of $\Gamma$.

Let $S_p = \mathbb{H}^2/\Gamma_p$. Then $S_p$  is an asymmetric surface, since  $\mathrm{Iso}^+(S_p) = N_{\mathrm{PSL}_2(\mathbb{R})}(\Gamma_p) / \Gamma_p = N_\Gamma(\Gamma_p) / \Gamma_p = \{ \mathrm{id} \}$. From the previous discussion, $S_p$ is clearly packable. 

As $D_{p-1}$ has index $p(p+1)/2$ in $\mathrm{PSL}_2(F_p)$ by \cite[Theorem 2.1(d)]{King} (cf. \cite{Dickson}), the genus of $S_p$ grows quadratically with $p$ and its exact value can be computed by using the Riemann--Hurwitz formula:
$$ g(S_p) = \frac{p(p+1)}{4} \cdot \left(  1 - \frac{1}{3} - \frac{1}{5} - \frac{1}{7} \right) + 1 = \frac{17}{210} p (p+1) + 1.$$

The smallest prime $p = 105\cdot n - 1$ is $419$, which corresponds to $g(S_p) = 14247$. 
\qed

\begin{rem}
By using \texttt{GAP} \cite{GAP}, among the $335$ different genus $3$ surfaces that cover the non--arithmetic orbifold $S = S(2,6,9)$, we find $254$ asymmetric ones. In order to obtain these numbers, we need to use \texttt{LowIndexSubgroupsFpGroup} routine to classify all index $18$ subgroups of $\pi^{orb}_1(S) = \langle a, b \, | \, a^2, b^6, (ab)^9 \rangle$ without torsion, and then choose those that correspond to conjugacy classes of length $18$. These are self--normalising, and by the same argument as in Theorem~\ref{prop:no-aut}, we get the smallest examples of packable surfaces without non--trivial isometries, with all genus $2$ surfaces being hyperelliptic.  
\end{rem}

\section{Packing density on surfaces}

In this section we shall consider circle packings of surfaces with congruent circles. More precisely, let $S_g$ be a hyperbolic surface of genus $g\geq 2$ with a packing of $K$ congruent radius $r$ circles on it, where $0 < r < \mathrm{inj\, rad}\, S_g$. Then the area covered by the circles is $2\pi K (\cosh r - 1)$, while the total surface area equals $4\pi(g-1)$. Then the packing density is simply $\rho(S_g, r) = \frac{K}{2} \cdot \frac{\cosh r - 1}{g - 1}$. The largest packing density of radius $r > 0$ circles associated with a given genus $g\geq 2$ is defined as $\rho(g, r) = \sup_{S_g \in \mathcal{T}_{g,r}} \rho(S_g, r)$, where $\mathcal{T}_{g,r} = \mathcal{T}_g \cap \{ S_g \, | \, \mathrm{inj\, rad}\, S_g > r \}$. By convention, supremum over the empty set equals $0$. Then the packing density associated solely with the genus $g \geq 2$ is $\rho(g) = \sup_{r>0} \rho(g ,r)$. 

\begin{prop}\label{prop-packing-density}
The following limit identity takes place: $\limsup_{g \to \infty} \rho(g) = \frac{3}{\pi}$.
\end{prop}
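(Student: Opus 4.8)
The plan is to sandwich $\rho(g)$ between a universal upper bound coming from the hyperbolic analogue of the simplicial (B\"or\"oczky) density bound and a matching lower bound realized by explicit tessellation packings. For the upper bound I would invoke B\"or\"oczky's theorem on packings in spaces of constant curvature: any packing of congruent radius-$r$ circles in $\matH^2$ has density, measured over its Dirichlet--Voronoi decomposition, at most
$$d(r) = \frac{3\alpha(\cosh r - 1)}{\pi - 3\alpha}, \qquad \alpha = \alpha(r) = 2\arcsin\frac{1}{2\cosh r},$$
where $\alpha$ is the vertex angle of the equilateral triangle spanned by the centres of three mutually tangent radius-$r$ circles (so that $\cosh r = \tfrac{1}{2\sin(\alpha/2)}$ and the triangle has area $\pi - 3\alpha$). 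A packing on $S_g$ lifts to a $\Gamma$-periodic packing of $\matH^2$ whose average density over a fundamental domain equals $\rho(S_g,r)$, so summing the cell-wise bound yields $\rho(S_g,r)\le d(r)$. One then checks the classical fact that $d(r)$ increases from $d(0^+)=\pi/\sqrt{12}$ to its supremum $\lim_{r\to\infty}d(r)=3/\pi$ (as $r\to\infty$ one has $\alpha\sim 1/\cosh r$ and $\alpha(\cosh r-1)\to 1$, so the numerator tends to $3$ and the denominator to $\pi$). Hence $\rho(S_g,r)\le d(r)<3/\pi$ for all $S_g$ and $r$, giving $\rho(g)\le 3/\pi$ and $\limsup_{g\to\infty}\rho(g)\le 3/\pi$.

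For the matching lower bound I would exhibit packings attaining $d(r)$ exactly. Fix an integer $k\ge 7$ and take the regular $\{3,k\}$ tessellation of $\matH^2$ by equilateral triangles of vertex angle $2\pi/k$ meeting $k$ around each vertex; its symmetry group is the cofinite Fuchsian $(2,3,k)$ triangle group $\Delta$, since $1/2+1/3+1/k<1$. Placing a circle at every vertex, the tangency condition forces $\cosh r_k = 1/(2\sin(\pi/k))$, and a direct check gives $2\pi/k=\alpha(r_k)$, so each tessellation triangle is exactly a B\"or\"oczky triangle. Quotienting by a torsion-free finite-index subgroup $\Gamma<\Delta$ through which the tessellation descends produces a closed surface carrying this packing; an Euler-characteristic count ($kV=2E$, $3F=2E$, $V-E+F=2-2g$) yields $V=12(g-1)/(k-6)$ circles, hence density
$$\rho_k = \frac{V\cdot 2\pi(\cosh r_k-1)}{4\pi(g-1)} = \frac{6(\cosh r_k-1)}{k-6} = \frac{3\bigl(1-2\sin(\pi/k)\bigr)}{(k-6)\sin(\pi/k)} = d(r_k),$$
which tends to $3/\pi$ as $k\to\infty$. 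Since the density is preserved under covering, once one such surface exists all of its covers carry the same packing of radius $r_k$ and density $\rho_k$, with genus tending to infinity; thus infinitely many genera $g$ satisfy $\rho(g)\ge\rho(g,r_k)\ge\rho_k$. Given $\varepsilon>0$, choosing $k$ with $\rho_k>3/\pi-\varepsilon$ shows $\rho(g)>3/\pi-\varepsilon$ for infinitely many $g$, so $\limsup_{g\to\infty}\rho(g)\ge 3/\pi$, and combined with the upper bound this gives equality.

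The one genuinely delicate point, which I expect to be the main obstacle, is the requirement $r_k<\mathrm{inj\,rad}\,S_g$ built into $\mathcal{T}_{g,r}$: it is needed both for the circles to embed and for $S_g$ to lie in the domain of the density function, yet $r_k\to\infty$, so I must produce surfaces of arbitrarily large systole on which the tessellation still descends. I would resolve this via residual finiteness of $\Delta$ (together with Selberg's lemma for torsion-freeness): for any $R$ there are only finitely many conjugacy classes of elements of translation length $\le 2R$, and a torsion-free finite-index normal subgroup avoiding all of them yields a quotient of injectivity radius $>R\ge r_k$, whose covers inherit systole at least as large as the genus grows. The remaining items — the combinatorial count, the identity $\rho_k=d(r_k)$, and the monotonicity and limit of $d(r)$ — are routine verifications.
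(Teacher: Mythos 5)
Your proposal is correct and takes essentially the same route as the paper: a lower bound from congruent circle packings lifted from triangle-group tessellations (your $\{3,k\}$ construction with $\cosh r_k = \tfrac{1}{2}\csc\tfrac{\pi}{k}$ is, for $k=2p$, exactly the paper's packing of manifold covers of the turnover $S(p,p,p)$ with $\cosh r_p = \tfrac{1}{2}\csc\tfrac{\pi}{2p}$), matched with the upper bound from B\"or\"oczky's simplicial density bound $d(\cdot)\le d(\infty)=\tfrac{3}{\pi}$ of \cite{BF, Kellerhals}. If anything, you are more thorough than the paper on two points: you apply the cell-wise Dirichlet--Voronoi bound to arbitrary packings on arbitrary surfaces (the paper spells this out only for its constructed packings, where all Voronoi cells are congruent), and you explicitly secure the constraint $r_k < \mathrm{inj\,rad}\, S_g$ built into $\mathcal{T}_{g,r}$ via residual finiteness, a condition the paper's Selberg-lemma covers are not checked to satisfy.
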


\begin{proof}
The idea is to pick an orbifold $\Sigma = S(p,p,p)$ such that $p \geq 4$ is an arbitrarily large natural number, and construct its manifold cover $S_p$. Since the orbifold fundamental group $\pi^{orb}_1(\Sigma) \cong \langle a, b \, | \, a^p, b^p, (ab)^p \rangle$ is a finitely generated matrix group then, by Selberg's lemma, such a cover $S_p$ always exists, and its degree has to be at least $p$ by a simple observation about the order of torsion elements. Thus the genus $g_p$ of $S_p$ satisfies $g_p \geq \mathrm{const} \cdot p$. 

Then we obtain a surface $S_p = \mathbb{H}^2 / \Gamma$ of genus $g_p$ such that $g_p \rightarrow \infty$, and $S_p$ is packable by a set $C_p$ radius $r_p$ circles, such that $\cosh r_p = \frac{1}{2}\, \csc \frac{\pi}{2p}$. What remains is to compute the ratio of the area covered  by circles on $S_p$ to the area of $S_p$. As it follows easily from the covering argument, this is the ratio of three $\frac{1}{2p}$--th pieces of a radius $r_p$ disc to the area of an equilateral triangle with angles $\frac{\pi}{p}$. 

The area in $\mathbb{H}^2$ enclosed by a radius $r>0$ circle equals $2\pi (\cosh r - 1)$. Thus we obtain 
\begin{equation}
\rho(S_p, C_p) = \frac{\frac{3\pi}{p} (\cosh r_p - 1)}{\pi - 3\pi/p} = \frac{3}{p-3} \left( \frac{1}{2} \csc \frac{\pi}{2p} - 1 \right) \rightarrow \frac{3}{\pi}, 
\end{equation} 
as $p \to \infty$, by using the Laurent $\csc(x) = \frac{1}{x} + \frac{x}{6} + O(x^2)$ for $\csc(x)$ at $x=0$.

Moreover, all circles and their Voronoi domains in the packing of $S_p$ are congruent to each other (since the order $3$ central symmetry of $\Sigma$ lifts to $S_p$ as the latter is a normal cover). The same holds for the lift of the packing to $\mathbb{H}^2$. The local density of each circle in its Voronoi domain is then equal to $\rho(S_p, C_p)$. The former can be estimated above by the simplicial packing density $d(2r)$, associated with a regular triangle of side length $2r$. However, the latter is exactly what we already computed above, i.e. simply $\rho(S_p, C_p) = d(2r) \leq d(\infty) = \frac{3}{\pi}$, as stated in \cite{BF, Kellerhals}. The proposition follows.
\end{proof}

\begin{rem}
Numerically, in Proposition~\ref{prop-packing-density} we have $\rho(g) \approx 0.954929658551$ for large enough genus $g$, which means that some of the genus $g$ surfaces may be very densely packed. However, some other ones across $\mathcal{T}_g$ may be packed quite poorly. 
\end{rem}

\begin{rem}
It is also clear from Proposition~\ref{prop-packing-density} that the best packing density in $\mathbb{H}^2$ achieved by invariant circle packings (i.e. circle packings invariant under the action of a co--finite Fuchsian group) coincides with the best local packing density achieved by packing congruent horoballs in the ideal triangle \cite{BF, Kellerhals}. 
\end{rem}

\section{Surfaces with cusps}
As a generalisation of the above facts to the case of hyperbolic surfaces with cusps, let us consider packing by horocycles instead of compact circles. In this case, the above question about a surface being packable can be restated without much alteration. The paper \cite{Williams} shows that each cusped hyperbolic surface is known to be packable by circles, while horocycle packings seem to be less studied in this context. 

\begin{figure}[h]
\centering
\includegraphics[scale=0.5]{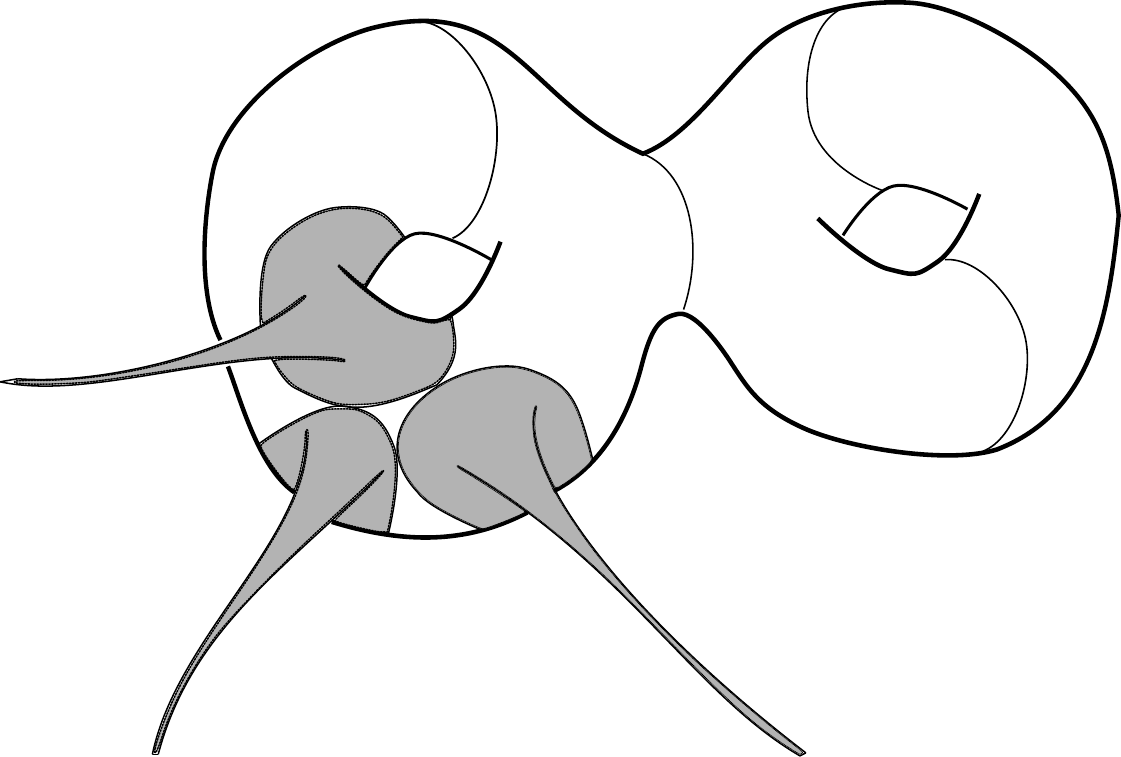}
\caption{A surface with $3$ cusps at which the corresponding horoballs (shaded) are mutually tangent. By closing up the cusps (which are topologically punctures) with extra points $c_i$, $i=1,2,3$, we compactify the surface and obtain the packing graph on it with vertices exactly $c_i$, $i=1,2,3$. The surface depicted above appears to be non--packable}\label{fig:horocusp}
\end{figure}

In the context of horocycle packings of a cusped surface $S$, we suppose that all horocycles are centred in the cusps of $S$, as shown in Figure~\ref{fig:horocusp}. The packing graph $T$ is formed by taking the completion $\overline{S}$ of $S$, with cusps ``filled'' by adding a number of points $c_1, \ldots, c_k$ (where $k \geq 1$ is the number of cusps of $S$), and then letting the vertices of $T$ be exactly $c_i$'s, while two vertices are connected by an edge whenever the corresponding horocycles are tangent. Then, we say that $S$ is packable by horocycles if $T$ triangulates $\overline{S}$, i.e. $\overline{S} \setminus T$ is a collection of topological triangles. 

\begin{theorem}\label{thm-packable-horocycles}
Let $S$ be a hyperbolic surface with cusps. Then $S$ is packable by congruent horocycles with packing density $\frac{3}{\pi}$ if and only if $S = \mathbb{H}^2/ \Gamma$ for some $\Gamma < \mathrm{PSL}_2(\mathbb{Z})$, up to an appropriate conjugation in $\mathrm{PSL}_2(\mathbb{R})$.
\end{theorem}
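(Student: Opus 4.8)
The plan is to characterize exactly when a cusped surface achieves the maximal horocycle packing density $\frac{3}{\pi}$, and to show this forces an arithmetic structure over $PSL(2,\mathbb{Z})$. First I would establish the density upper bound. When horocycles centred at the cusps are mutually tangent and their centres triangulate $\overline{S}$, the surface decomposes into ideal triangles, each with a maximal horoball configuration as in Figure~\ref{fig:max-configuration}. A direct computation shows that the portion of an ideal triangle covered by its three horoball pieces has area exactly $\frac{3}{\pi}$ times the triangle's area (the ideal triangle has area $\pi$, and the three horoball sectors each subtend angle $\frac{\pi}{3}$ at the cusp, contributing a fixed total area). Crucially, this local density $\frac{3}{\pi}$ is attained only when each horoball is the \emph{maximal} one inscribed in the cusp, and maximality forces all triangles to be ideal and the horocycles mutually tangent along every edge.

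Next I would analyze what ``congruent horocycles with mutual tangency forming a triangulation'' means metrically. The key observation is that a triangulation of $\overline S$ by ideal triangles in which a single horocycle at each cusp is tangent to its neighbours along all incident edges is extremely rigid: tangency of horocycles along an edge pins down the relationship between the horocyclic displacement at the two endpoints, and the requirement that this be consistent around each cusp forces the ``shear'' or horocyclic gluing parameters to be integers (or half-integers, absorbed by conjugation). Concretely, I would lift the picture to $\mathbb{H}^2$ and normalise one ideal triangle to the standard triangle with vertices $0, 1, \infty$, whose maximal inscribed horoballs are the classical Ford configuration. The tangency conditions then propagate, and matching the horocycle at a shared cusp across adjacent triangles expresses the gluing isometries in terms of the modular generators; congruence of the horocycles removes the remaining scaling freedom.

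The heart of the argument, and the main obstacle, is the reverse implication turned into a clean algebraic statement: I would show that the mutually tangent maximal horoball configuration on an ideal triangulation is \emph{projectively equivalent} to the Farey tessellation of $\mathbb{H}^2$, whose symmetry group is exactly $PSL(2,\mathbb{Z})$ acting on the extended rationals $\mathbb{Q} \cup \{\infty\} = \partial\mathbb{H}^2 \cap \widehat{\mathbb{Q}}$. The Ford circles, which are the horocycles tangent to $\partial\mathbb{H}^2$ at rational points with the standard tangency pattern, realise precisely this configuration, and two Ford circles based at $\frac{a}{b}$ and $\frac{c}{d}$ are tangent if and only if $|ad-bc|=1$, i.e. the endpoints span an edge of the Farey graph. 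This is the step where I expect to have to be careful: I must argue that any density-$\frac{3}{\pi}$ congruent horocycle packing, after conjugation, has all its cusp points at the orbit of $\infty$ under a group preserving the integer tangency condition, and that this group necessarily lies in $PSL(2,\mathbb{Z})$. Once that is in hand, $\Gamma$ must permute the Ford horoballs, hence preserve the Farey tessellation, hence $\Gamma < \mathrm{Stab}(\text{Farey}) = PSL(2,\mathbb{Z})$, up to conjugation.

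Conversely, for the easy direction I would take any $\Gamma < PSL(2,\mathbb{Z})$ of finite covolume (with cusps), observe that the Ford circles descend to a horocycle packing on $S = \mathbb{H}^2/\Gamma$ whose tangency graph is the image of the Farey graph and hence triangulates $\overline S$, and that all these horocycles are congruent and achieve the local density $\frac{3}{\pi}$ computed in the first paragraph. Assembling the two directions gives the stated equivalence. I would present the ideal-triangle density computation and the Ford-circle/Farey identification as the two self-contained lemmas the proof rests on, with the arithmetic rigidity of the integer tangency condition being the genuinely delicate point to nail down rather than to wave away.
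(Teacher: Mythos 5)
Your proposal is correct in substance, and its first half is the same as the paper's: both arguments observe that the packing graph cuts $\overline{S}$ into ideal triangles of area $\pi$, that the packing density is the average of the local densities $\rho_m \leq \frac{3}{\pi}$, and that total density $\frac{3}{\pi}$ forces every triangle to carry the unique maximal configuration of three congruent, mutually tangent horoballs. Note that neither you nor the paper actually proves this local bound and its uniqueness; the paper cites \cite{B}, \cite{Fejes-Toth} and \cite{Kellerhals}, and you should do likewise rather than present it as a direct computation. Where you genuinely diverge is the rigidity step. The paper drops perpendiculars from the centre $O_m$ of each triangle to the three tangency points, subdividing each ideal triangle into six triangles $\Delta$ with angles $0$, $\frac{\pi}{3}$, $\frac{\pi}{2}$; the surface is then tessellated by reflected copies of $\Delta$, hence covers the reflection orbifold $\mathbb{H}^2/W(\Delta)$, and orientability makes this cover factor through the orientation cover, whose orbifold fundamental group is $PSL(2,\mathbb{Z})$. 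You instead normalise one lifted triangle to $(0,1,\infty)$ with its Ford horoballs, propagate the matching condition across edges to identify the lifted triangulation with the Farey tessellation, and conclude via $\Gamma \leq \mathrm{Stab}^{+}(\mathcal{F}) = PSL(2,\mathbb{Z})$. The two routes are equivalent ($PSL(2,\mathbb{Z})$ is exactly the orientation-preserving symmetry group of the Farey tessellation, and $W(\Delta)$ its full symmetry group), but yours is more hands-on and makes the arithmetic visible through the Ford tangency condition $|ad-bc|=1$, while the paper's is shorter because orbifold covering theory replaces your propagation induction and the identification of the Farey stabiliser. The converse directions agree: both descend the invariant maximal configuration to $S$.

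Two local slips are worth fixing, though neither is fatal. First, your justification of the value $\frac{3}{\pi}$ via horoball sectors ``subtending angle $\frac{\pi}{3}$ at the cusp'' does not parse: the angle at an ideal vertex is $0$. The correct computation is that in the triangle $(0,1,\infty)$ the Ford horoball piece at $\infty$ is $\{0 \leq x \leq 1,\ y \geq 1\}$, of area $\int_0^1 \int_1^{\infty} y^{-2}\, dy\, dx = 1$, so the three congruent pieces cover area $3$ out of $\pi$. Second, your claim that tangency forces the shear parameters to be ``integers (or half-integers)'' is wrong in detail: since the horocycle at each cusp is a single horocycle, all triangles incident to a lifted cusp share one horoball, and the uniqueness of the maximal configuration then forces the horoballs assigned to a shared edge by its two adjacent triangles to coincide; a computation in your normalisation (the neighbour of $(0,1,\infty)$ across $[0,\infty]$ must have third vertex $t=-1$) shows this forces every shear to be exactly $0$ --- a nonzero integer shear already breaks the matching. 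Shear identically zero is precisely what characterises the Farey tessellation, after which your propagation and stabiliser argument go through as you describe.
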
 

\begin{proof}
Let $S$ be packed by congruent horoballs centred at its cusps, so that the packing graph $T$ triangulates $\overline{S}$, then each triangle $T_m$, $m=1, \ldots, k$, in $\overline{S}\setminus T$ is an ideal triangle that contains some parts of horocycles in its vertices. Let $\rho_m$ be the local density of the horocycles in $T_m$, i.e. the ratio of the area contained in the horocycles to the total area of $T_m$. It is well--known from hyperbolic geometry that all ideal triangles are isometric and have area $\pi$. 

Another important fact is that the best local density of horoballs in the ideal triangle $T_m$, which is $\frac{3}{\pi}$, is achieved by $3$ congruent horoballs bounded by the respective horocycles $C^m_k$, $k = 1,2,3$, centred at the vertices of $T_m$. Each pair $C^m_i$ and $C^m_j$ is tangent at a point $p^m_{ij}$ on the respective side of $T$. The perpendiculars to the sides at $p^m_{ij}$'s intersect in the common point inside $T_m$, which we shall call its centre $O_m$, as shown in Figure~\ref{fig:max-configuration}. Let us call such a configuration of horoballs in $T_m$ \textit{the maximal configuration}, which is known to be unique. The density and uniqueness of the maximal configuration are discussed in \cite[Theorem 4]{B}, \cite[VIII.38, pp.~253--254]{Fejes-Toth}, and \cite[Proposition 2.2]{Kellerhals} (see also the remarks thereafter).

\begin{figure}[ht]
\centering
\includegraphics[scale=0.9]{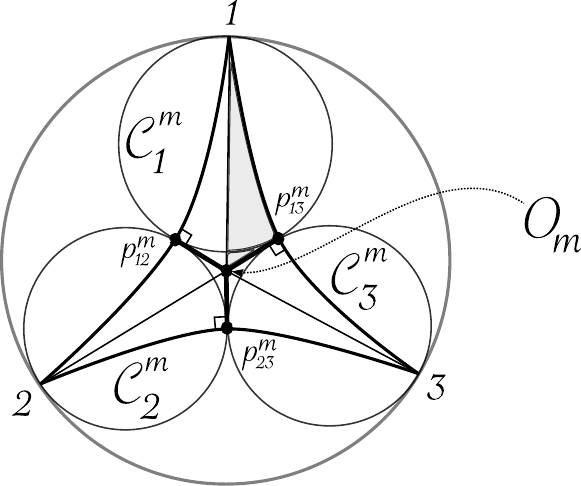}
\caption{An ideal triangle $T_m$ with its maximal horoball configuration. It splits into $6$ triangles with $0$, $\pi/3$ and $\pi/2$ angles centred around $O_m$ (one of the triangles is shaded)}\label{fig:max-configuration}
\end{figure}

Let $\rho(C)$ be the density of the horocycle packing $C = \{ C_1, C_2, \ldots, C_k \}$ of $S$. Then, by assumption, we have that 
$\rho(C) = \frac{1}{k} \sum^k_{m=1} \rho_m = \frac{3}{\pi}$, while $\rho_m \leq \frac{3}{\pi}$ by the above bound on horoball density in ideal triangles. This implies that $\rho_m = \frac{3}{\pi}$, and each triangle $T_m$ has the maximal density configuration of horoballs. Thus we can drop $3$ perpendiculars from the centre $O_m$ of $T_m$ onto its sides, and split $T_m$ into $6$ congruent hyperbolic triangles $\Delta$ with dihedral angles $0$, $\pi/3$ and $\pi/2$. Since the surface $S$ becomes tessellated by copies of $\Delta$ such that each copy can be obtained from one of the neighbours by reflecting in one of the sides, we obtain that $S$ covers the reflection orbifold $O_\Delta = \mathbb{H}^2/W(\Delta)$, where $W(\Delta)$ is the Coxeter group generated by reflections in the lines supporting the sides of $\Delta$. Since $S$ is orientable, then $S \rightarrow O_\Delta$ factors through the orientation cover,  and we obtain $S = \mathbb{H}^2/\Gamma \rightarrow O^+_\Delta$, where $O^+_\Delta$ is the orientation cover of $O_\Delta$ and $\pi^{orb}(O^+_\Delta) = \langle a, b, c \,|\, abc, b^2, c^3 \rangle \cong \mathrm{PSL}_2(\mathbb{Z})$.  Thus $\Gamma < \mathrm{PSL}_2(\mathbb{Z})$, up to conjugation in $\Iso^+(\mathbb{H}^2) = \mathrm{PSL}_2(\mathbb{R})$.

If, up to conjugation, $\Gamma < \mathrm{PSL}_2(\mathbb{Z})$, then $S$ covers the reflection orbifold $\mathbb{H}^2 / W(\Delta)$, where $\Delta$ is the triangle with $0$, $\pi/3$, $\pi/2$ angles as above, and $W(\Delta)$ is its reflection group. We take the maximal horoball bounded by the horocycle $C$ centred at the ideal vertex of $\Delta$ such that $C$ is tangent to the opposite side (exactly at the right--angled vertex). Then the local density of $C$ in $\Delta$ is $\frac{3}{\pi}$ (by a straightforward computation similar to that of Proposition~\ref{prop-packing-density}). Thus $C \cap \Delta$ is lifted to a horocycle packing on $S$ with packing density exactly $\frac{3}{\pi}$.
\end{proof}

\begin{rem}
A more general case of horoball packing where horoballs are allowed to have \textit{different types} at different vertices of the respective Coxeter tilings are considered in \cite{Sz1, Sz2}. 
\end{rem}


\begin{thebibliography}{00} 
\normalsize

\bibitem{BeardonStephenson}\textsc{A. F. Beardon -- K. Stephenson}: The uniformization theorem for circle packings, Indiana Univ. Math. J. \textbf{39} (1990), 1383--1425.

\bibitem{B}\textsc{K. B\"or\"oczky}: Packing of spheres in spaces of constant curvature, Acta Math. Acad. Sci. Hungarica \textbf{32} (1978), 243--261.

\bibitem{BF}\textsc{K. B\"or\"oczky -- A. Florian}: \"{U}ber die dichteste Kugelpackung im hyperbolischen Raum, Acta Math. Acad. Sci. Hungarica \textbf{15} (1964), 237--245.

\bibitem{BR}\textsc{L. Bowen -- C. Radin}: Densest packing of equal spheres in hyperbolic space, Discrete Comput. Geom. \textbf{29} (2002), 23--39. 

\bibitem{BowersStephenson}\textsc{P. L. Bowers -- K. Stephenson}: The set of circle packing points in the Teichm\"uller space of a surface of finite conformal type is dense, Math. Proc. Cambridge Phil. Soc. \textbf{111} (1992), 487--513.

\bibitem{Dickson}\textsc{L.E. Dickson}: Linear groups, with an exposition of the Galois field theory, Teubner, Leipzig (1901).

\bibitem{Fejes-Toth}\textsc{L. Fejes-T\'oth}:  Regular Figures, Oxford: Pergamon Press, 1964. 

\bibitem{GAP}\textsc{The GAP Group}: GAP -- Groups, Algorithms, and Programming, Version 4.11.0 (2020); \url{https://www.gap-system.org}

\bibitem{Greenberg}\textsc{L. Greenberg}: Maximal groups and signatures, in: Discontinuous Groups and Riemann Surfaces, Proc. Conf. Maryland, 1973, ed. L. Greenberg, Princeton University Press, Princeton NJ, 1974, 207--226.

\bibitem{Jones}\textsc{G. Jones}: Realisation of groups as automorphism groups in categories, Ars Math. Contemp. \textbf{21}, \#P1.01 (2021) 

\bibitem{Jones-2}\textsc{G. Jones}: A short proof of Greenberg's theorem, arXiv:1908.06675

\bibitem{Kellerhals}\textsc{R. Kellerhals}: Ball packings in spaces of constant curvature and the simplicial density function, J. reine angew. Math. \textbf{494} (1998), 189--203. 

\bibitem{King} \textsc{O.H. King}: The subgroup structure of finite classical groups in terms of geometric configurations, in: Surveys in Combinatorics 2005,  Cambridge University Press, pp. 29--56 (2005); available online at \texttt{https://www.staff.ncl.ac.uk/o.h.king/KingBCC05.pdf}

\bibitem{LMS}\textsc{L. Louder -- A. Mishchenko --  J. Souto}: Three techniques for obtaining algebraic circle packings, Michigan Math. J. \textbf{63} (2014), 535--552.

\bibitem{LM}\textsc{Z. Lu\v{c}i\'{c} -- E. Moln\'{a}r}: Fundamental domains for planar discontinuous groups and  uniform tilings, Geom. Dedicata \textbf{40} (1991), 125--143. 

\bibitem{LMV}\textsc{Z. Lu\v{c}i\'{c} -- Emil Moln\'ar -- N. Vasiljevi\'c}: An algorithm for classification of fundamental polygons for a plane discontinuous group, in: Discrete Geometry and Symmetry, pp. 257--278 (2018); Springer Proceedings in Mathematics and Statistics \textbf{234}

\bibitem{McCaughan}\textsc{G. McCaughan}: Some results on circle packings. PhD thesis. University of Cambridge, 1996.

\bibitem{Neukirch}\textsc{J. Neukirch}: Algebraic Number Theory, Grundlehren der mathematischen Wissenschaften 322, Springer--Verlag, Berlin (2013).

\bibitem{PR}\textsc{V.P. Platonov -- A.S. Rapinchuk}: Algebraic Groups and Number Theory, Academic Press, Boston (1994).

\bibitem{Ratcliffe}\textsc{J. G. Ratcliffe},  \textit{Foundations of hyperbolic manifolds}, Graduate Texts in Mathematics \textbf{149}. New--York: Springer, 2013.

\bibitem{Singerman}\textsc{D. Singerman}: Symmetries of Riemann surfaces with large automorphism group, Math. Ann. \textbf{210} (1974), 17--32.

\bibitem{S}\textsc{K. Stephenson}: Introduction to circle packing: The theory of discrete analytic functions, Cambridge University Press, 2005.

\bibitem{Sz1}\textsc{J. Szirmai}: Horoball packings and their densities by generalized simplicial density function in the hyperbolic space, Acta Math. Hungarica \textbf{136} (1-2), 39--55 (2012)

\bibitem{Sz2}\textsc{J. Szirmai}: Horoball packings to the totally asymptotic regular simplex in the hyperbolic $n$--space, Aequat. Math. \textbf{85}, 471--482 (2013)

\bibitem{Takeuchi}\textsc{K. Takeuchi}: Arithmetic triangle groups, J. Math. Soc. Japan \textbf{29} (1977), 91--106.

\bibitem{Vinberg}\textsc{\`E. B. Vinberg}: The smallest field of definition of a subgroup of the group $ \mathrm{PSL}_2$, Sbornik: Mathematics \textbf{80} (1995), 179--190.

\bibitem{Williams}\textsc{G. B. Williams}: Noncompact surfaces are packable, Journal d'Analyse Math\'ematique \textbf{90} (2003), 243--255.

\end{thebibliography}
\end{document}